\begin{document}

\newtheorem{theorem}{Theorem}[section]
\newtheorem{lemma}[theorem]{Lemma}
\newtheorem{corollary}[theorem]{Corollary}

\theoremstyle{remark}
\newtheorem*{remark}{Remark}

\title[A family with monodromy group $\mathrm{PSL}_6(2)$]{A family of 4-branch-point covers with monodromy group $\mathrm{PSL}_\textbf{6}\textbf{(2)}$}

\author{Dominik Barth}\email{dominik.barth@mathematik.uni-wuerzburg.de}
\address{Institute of Mathematics, University of W\"urzburg, Emil-Fischer-Stra{\ss}e 30, 97074 W\"urzburg, Germany}

\author{Andreas Wenz}\email{andreas.wenz@mathematik.uni-wuerzburg.de} 
\address{Institute of Mathematics, University of W\"urzburg, Emil-Fischer-Stra{\ss}e 30, 97074 W\"urzburg, Germany}

\begin{abstract}
We describe the explicit computation of a family of 4-branch-point rational functions of degree 63 with monodromy group $\mathrm{PSL}_6(2)$.
This, in particular, negatively answers a question by J.~König whether there exists a such a function with rational coefficients.
The computed family also gives rise to non-regular degree-126 realizations of $\mathrm{Aut}(\mathrm{PSL}_6(2))$ over $\mathbb{Q}(t)$.
\end{abstract}
\maketitle

\section{Introduction}
Let $C:=(C_1,C_2,C_3,C_3)$ be the genus-0 class vector of $\mathrm{PSL}_6(2)$ in its natural 2-transitive action on the 63 non-zero elements of ${\mathbb{F}_2}^6$, where $C_1$, $C_2$ and $C_3$ are the unique conjugacy classes of cycle structure $2^{28}.1^7$, $2^{16}.1^{31}$ and  $3^{20}.1^3$, respectively.
Then, using the theory of Hurwitz spaces J.~König~\cite[p.~109]{Koe} established the theoretical existence of a hyperelliptic genus-$3$ curve $\mathcal{H}$ defined over $\mathbb{Q}$ and  polynomials $p,q\in \mathbb{Q}(\mathcal{H})[X]$ satisfying the following:
\begin{enumerate}
\item[$\bullet$] The family $\mathcal{F}$ of normalized covers with ramification locus $(0,\infty,1+\sqrt{\lambda},1-\sqrt{\lambda})$ where $\lambda \in \mathbb{P}^1\setminus \{0,1,\infty\}$ and ramification structure $C$ can be parameterized by a rational function
$
F = \frac{p}{q} \in \mathbb{Q}(\mathcal{H})(X).
$
\item[$\bullet$] $\mathrm{Gal}(p-tq \mid \mathbb{Q}(\mathcal{H})(t) ) \cong \mathrm{PSL}_6(2).$
\end{enumerate}

In order to decide whether $\mathrm{PSL}_6(2)$ occurs regularly as a Galois group over $\mathbb{Q}(t)$ with ramification structure $C$, one has to check the existence of $\mathbb{Q}$-rational points on $\mathcal{H}$ that lead to Galois group preserving specializations. König also mentions that without explicit computation of $\mathcal{H}$ there seems to be no way of finding an answer to this question.

Note that $\mathrm{PSL}_6(2)$ and $\mathrm{PSp}_6(2)$ of degree $63$ are expected to be the largest (with respect to the permutation degree) almost simple primitive groups having a generating genus-0 tuple of length at least 4 with the socle being a simple group of Lie type. While multi-parameter families of polynomials with Galois group $\mathrm{PSp}_6(2)$ of degree $28$ and $36$ were calculated in \cite{BKW}, the case $\mathrm{PSL}_6(2)$ remained open. 
With the recent development in computing multi-branch-point covers in \cite{BKW} we are able to give explicit defining equations for $\mathcal{H}$ and $F$.
Alternative techniques for such calculations are described by Couveignes~\cite{Couveignes}, Hallouin~\cite{HALLOUIN2005259}, König~\cite{Koe2017,Koe}, Malle~\cite{Malle} and Müller~\cite{Mueller2012}.

This paper is structured as follows:
Section~\ref{sec2} depicts the computation of $\mathcal{H}$ and $F$.
The computed results are verified in section~\ref{sec3} and we will show that $\mathcal{H}$ does not have $\mathbb{Q}$-rational points that lead to Galois group preserving specializations.
As a consequence we deduce that $\mathrm{PSL}_6(2)$ does not occur as the monodromy group of a rational function with rational coefficients  ramified over at least 4 points.
Furthermore we obtain explicit polynomials of degree $126$ over $\mathbb{Q}(t)$ with Galois groups isomorphic to $\mathrm{Aut}(\mathrm{PSL}_6(2))$ which are presented in section~\ref{sec4}.

\section{Computation}\label{sec2}

Let $C= (C_1,C_2,C_3,C_3)$ be the genus-0 class vector from the introduction and
$\mathcal{F}$ the family of all $\mathrm{PSL}_6(2)$-covers $f: \mathbb{P}^1 \to \mathbb{P}^1$ of degree 63 such that:
\begin{enumerate}[(i)]
\item $f$ is a 4-branch-point cover ramified over  $0,\infty,1\pm \sqrt{\lambda}$ for some $\lambda \in \mathbb{P}^1\setminus \{0,1,\infty\}$ with ramification structure:
\begin{center}
\renewcommand{\arraystretch}{1.5}
\begin{tabular}{c||c|c|c|c}
branch point & 0 & $\infty$ & $1 +  \sqrt{\lambda}$  &  $1 - \sqrt{\lambda}$ \\ \hline 
inertia class & $C_1$ & $C_2$ & $C_3$ & $C_3$
\end{tabular}
\renewcommand{\arraystretch}{1}
\end{center}
\item $f$ is normalized in the following sense: The sum of all simple roots of $f$ is 0 and the sum of all double poles is 1.
Furthermore,
$\infty$ is the unique simple pole of $f$ fixed under the action of the normalizer of the inertia group at $\infty$. Note that for any $g\in C_2$ exactly one length-1-cycle of $g$ is fixed under $N_{\mathrm{PSL}_6(2)}(\left<g\right>)$.
\end{enumerate}

\subsection{Properties of \texorpdfstring{$\mathcal{F}$}{F}}

The straight inner Nielsen class $\mathrm{SNi}^\mathrm{in}(C)$ of $C$ is the set of quadruples $(\sigma_1,\sigma_2,\sigma_3,\sigma_4) \in C_1\times C_2 \times C_3 \times C_3$ up to simultaneous conjugation satisfying both $\sigma_1 \sigma_2 \sigma_3 \sigma_4 = 1$ and $\left<\sigma_1,\sigma_2,\sigma_3,\sigma_4 \right> = \mathrm{PSL}_6(2)$.
A computer computation with Magma~\cite{Magma} yields $|\mathrm{SNi}^\mathrm{in}(C)| = 48$.

Since $\mathcal{F}$ carries the structure of an algebraic variety, various properties of $\mathcal{F}$ can be studied via the branch-point reference map:
\begin{equation} \label{Psi}
\Psi: 
\begin{cases}
\mathcal{F} \to \mathbb{P}^1_\lambda \\
\text{cover with ramification locus $\left\lbrace 0,\infty,1\pm\sqrt{\lambda}\right\rbrace$}\mapsto \lambda
\end{cases}
\end{equation}
By Riemann's existence theorem for each $\lambda_0  \in \mathbb{P}^1\setminus \{0,1,\infty\}$ and  $\sigma \in \mathrm{SNi}^\mathrm{in}(C)$ there is a unique cover (up to inner Möbius transformation) with ramification locus $(0,\infty, 1 \pm \sqrt{\lambda_0})$ and ramification $\sigma$. The normalization conditions stated in (ii) guarantee that $\mathcal{F}$ contains exactly one such cover.
As a consequence $\mathcal{F}$ is a curve and 
$\Psi$ turns out to be a Belyi map of degree $|\mathrm{SNi}^\mathrm{in}(C)| = 48$ with ramification locus $(0,1,\infty)$. The ramification of $\Psi$, denoted by  $(x,y,z)\in \mathrm{Sym}(\mathrm{SNi}^\mathrm{in}(C))^3$, is also well studied and can be calculated explicitly using the formula in \cite[Theorem III.7.8]{MM} which arises from the action of the braid group on $\mathrm{SNi}^\mathrm{in}(C)$. This triple generates a transitive group and consists of cycle structures $(6^5.4^4.2^1,7^4.4^3.3^2.2^1,2^{24})$. From this we can deduce that $\mathcal{F}$ is connected of genus $3$ (by the Riemann-Hurwitz formula).
Furthermore note that $\mathcal{F}$ can be defined over $\mathbb{Q}$ since all classes of $C$ are rational. 
 
 In the following the function field of $\mathcal{F}$ will be denoted by $\mathbb{Q}(\mathcal{F})$. The family $\mathcal{F}$ can be parameterized by a rational function
\begin{equation} \label{F}
F  = \frac{p}{q} \in \mathbb{Q}(\mathcal{F})(X)
\end{equation}
with $p,q\in \mathbb{Q}(\mathcal{F})[X]$ such that any element of $\mathcal{F}$ is obtained via specializing $F$ at some point in $\mathcal{F}$.

\subsection{Defining equations for elements in \texorpdfstring{$\mathcal{F}$}{F}} \label{defeq}
Fix $f_{\lambda_0}\in \mathcal{F}$ with $\Psi(f_{\lambda_0}) = \lambda_0$ for some $\lambda_0 \in \mathbb{P}^1\setminus \{0,1,\infty\}$. According to (i) and (ii) there exist a scalar $c_0$ and separable, monic and mutually coprime polynomials $p_7,p_{28},q_{16},q_{30},r_3,r_{20},s_3,s_{20}$ of respective degree denoted in the index such that
\begin{equation} \label{newton}
f_{\lambda_0}  = \frac{c_0 \cdot p_7 \cdot p_{28}^2}{q_{30} \cdot q_{16}^2} 
= 1+ \sqrt{\lambda_0} + \frac{c_0 \cdot r_3 \cdot r_{20}^3}{q_{30} \cdot q_{16}^2} 
= 1- \sqrt{\lambda_0} + \frac{c_0 \cdot s_3 \cdot s_{20}^3}{q_{30} \cdot q_{16}^2}
\end{equation}
where the traces of $p_7$ and $q_{16}$ are $0$ and $1$, respectively.

By comparing coefficients \eqref{newton} can be considered as a system of polynomial equations where $c_0$ and the coefficients of $p_7,p_{28},\dots, s_{20}$ are considered to be the unknowns.  This system consists of $126$ unknowns and $126$ equations, hence it is expected to have at most finitely many solutions with $f_{\lambda_0}$ being one of them.

\subsection{Walking on \texorpdfstring{$\mathcal{F}$}{F}}\label{walking} \label{newtoniteration}
Assume we are given an explicit approximative equation for $f_{\lambda_0}$, then we are able to compute another approximative equation of a cover $f_{\lambda_0+\delta}\in \mathcal{F}$ with $\Psi(f_{\lambda_0+\delta}) = \lambda_0+\delta$ for some sufficiently small $\delta\in \mathbb{C}$. This can be achieved via Newton iteration by assembling the corresponding polynomial equations similar to \eqref{newton} and using $f_{\lambda_0}$ as the initial value.

Starting from an approximative equation of a cover $f_{\mathrm{start}}\in \mathcal{F}$ we can find an approximative equation for another cover $f_{\mathrm{end}}\in \mathcal{F}$ with prescribed $\lambda_{\mathrm{end}}:= \Psi(f_{\mathrm{end}})\in \mathbb{P}^1\setminus \{0,1,\infty\}$  and  prescribed ramification $\sigma_{\mathrm{end}}\in \mathrm{SNi}^{\mathrm{in}}(C)$:

Let $\lambda_{\mathrm{start}}:=\Psi(f_{\mathrm{start}})$ and $\gamma_1$ be a path in $\mathbb{P}^1\setminus \{0,1,\infty\}$ connecting $\lambda_{\mathrm{start}}$ to $\lambda_{\mathrm{end}}$.
Lift $\gamma_1$ via $\Psi$ to $\mathcal{F}$ from a path starting in $f_{\mathrm{start}}$ and ending in some element denoted by $f_{\mathrm{end}}^*\in \mathcal{F}$, then $\Psi(f_{\mathrm{end}}^*) = \lambda_{\mathrm{end}}$. The ramification of $f_{\mathrm{end}}^*$ will be denoted by $\sigma_{\mathrm{end}}^*$. According to the ramification of $\Psi$ we can give a closed path $\gamma_{2}$ in $\mathbb{P}^1\setminus \{0,1,\infty\}$ starting in $\lambda_{\mathrm{end}}$ with the property: The lifted path of $\gamma_{2}$ in $\mathcal{F}$ via $\Psi$ connects $f_{\mathrm{end}}^*$ to another element $f_{\mathrm{end}}$ with $\Psi(f_{\mathrm{end}}) = \lambda_{\mathrm{end}}$ and ramification $\sigma_{\mathrm{end}}$.
Using Newton iteration as explained before we can slightly deform $f_{\mathrm{start}}$ at its ramification locus along $\gamma_{2}\circ \gamma_{1}$ to obtain an approximate equation for $f_{\mathrm{end}}$ having the prescribed ramification data.

\subsection{Splitting behaviour of \texorpdfstring{$\Psi$}{Psi}.}
The monodromy group of $\Psi$, generated by $x,y,z$, turns out to be imprimitive acting on 24 blocks, each of size 2.  The induced action of $(x,y,z)$ on the set $\mathcal{B}$ of these blocks, denoted by $(x',y',z')\in \mathrm{Sym}(\mathcal{B})^3$, consists of cycle structures
$(4^2.3^5.1^1$, $7^2.4^1.3^1.2^1.1^1$, $2^{12})$.
Since $(x',y',z')$ describes a genus-0 triple  the cover $\Psi$ splits as follows: 
\begin{equation} \label{split}
\Psi: \mathcal{F}  \stackrel{\Psi_{2}\phantom{.}}{\longrightarrow}  \mathbb{P}^1_\mu  \stackrel{\Psi_{24}\phantom{..}}{\longrightarrow} \mathbb{P}^1_\lambda
\end{equation}
 with a degree-2 subcover $\Psi_{2}$ and a degree-24 subcover $\Psi_{24}$ with ramification $(x',y',z')$ over $(0,1,\infty)$. The latter cover can be computed explicitly (using for example the method explained in \cite{BKW}):
\begin{equation}\label{belyi}
\lambda = \Psi_{24}(\mu) = \frac{p_{24}}{q_{24}} = 1 - \frac{r_{24}}{q_{24}}
\end{equation}
where
\begin{align*}
p_{24} &:= \left(\mu -\frac{1}{4}\right) \left(\mu^2-\frac{11}{16} \mu + \frac{1}{8}\right)^4 \left(\mu^5 -\frac{137}{4}  \mu^4 +\frac{178}{3}  \mu^3 - 34  \mu^2 + 8\mu -\frac{2}{3}\right)^3,    \\
r_{24} &:= 243 \left(\mu -\frac{1}{2}\right)^3\left(\mu -\frac{1}{3}\right)^4\left(\mu -\frac{5}{16}\right)^2 \left(\mu^2+\frac{1}{3} \mu -\frac{1}{6}\right)^7,\\
q_{24} &:= p_{24} + r_{24}.
\end{align*}

Recall that the cycle structures of $(x,y,z)$ and $(x',y',z')$ are given by
\[(6^5.4^4.2^1,7^4.4^3.3^2.2^1,2^{24})  \qquad \text{ and } \qquad (4^2.3^5.1^1,7^2.4^1.3^1.2^1.1^1, 2^{12}).\]
It is now easy to see, that these cycle structures in combination with $p_{24}$, $q_{24}$ and $r_{24}$ uniquely determine the ramification locus $\mathcal{R}_{\Psi_2} \subseteq \mathbb{P}^1_\mu$ of the degree-2 subcover $\Psi_2$.
We find $\mathcal{R}_{\Psi_2} = R_0\cup R_1\cup R_\infty $ with
\begin{align*}
R_0 :=\;& \Psi_{24}^{-1}(0)\cap \mathcal{R}_{\Psi_2} \\=\;& \left\lbrace\frac{1}{4} \right\rbrace \cup \left\lbrace \text{roots of } \mu^5 -\frac{137}{4}  \mu^4 +\frac{178}{3}  \mu^3 - 34  \mu^2 + 8\mu -\frac{2}{3}\right\rbrace,\\
R_1:=\;&\Psi_{24}^{-1}(1)\cap \mathcal{R}_{\Psi_2} = \left\lbrace  \frac{5}{16},\infty\right\rbrace ,\\
R_\infty :=\;&\Psi_{24}^{-1}(\infty)\cap \mathcal{R}_{\Psi_2} = \emptyset.
\end{align*}

\subsection{A model for \texorpdfstring{$\mathcal{F}$}{F}} \label{model}
Since $z'$ has a unique fixed point and $\mathcal{F}$ is defined over $\mathbb{Q}$ the function field analogue of \eqref{split} can be stated as 
\begin{equation} \label{KTurm}
\mathbb{Q}(\mathcal{F}) \stackrel{2}{\geq}  \mathbb{Q}(\mu) \stackrel{24}{\geq}  \mathbb{Q}(\lambda).
\end{equation}
where $\mu$ is a root of $p_{24}- \lambda q_{24}\in \mathbb{Q}(\lambda)[X]$ and
$\mathbb{Q}(\mathcal{F})$ being the degree-2 extension of $\mathbb{Q}(\mu)$ corresponding to $\Psi_2$.
The computation of $\mathcal{R}_{\Psi_2}$ guarantees the existence of a primitive element $y\in \mathbb{Q}(\mathcal{F})$, i.e.\ $\mathbb{Q}(\mathcal{F}) = \mathbb{Q}(\mu,y)$, with defining equation
\[y^2 = c P(\mu):= c\left(\mu^5 - \frac{137}{4}  \mu^4 + \frac{178}{3}  \mu^3 - 34 \mu^2 + 8 \mu - \frac{2}{3}\right)\left(\mu - \frac{1}{4}\right)\left(\mu - \frac{5}{16}\right)\]
for some square-free $c\in \mathbb{Q}$ which will be determined in \ref{compelement}.
For this reason a hyperelliptic $\mathbb{Q}$-model for $\mathcal{F}$ can be chosen to be 
\begin{equation} \label{H}
\mathcal{H} := \{(\mu,y ): y^2=cP(\mu)\}.
\end{equation}
Using this particular model $\Psi_2$ is then given by $\Psi_2(\mu,y) = \mu$ for all $(\mu,y)\in \mathcal{H}$.

\subsection{Field of definition for elements in \texorpdfstring{$\mathcal{F}$}{F}} \label{defs}

Since $\mathcal{H}$ is a model for $\mathcal{F}$, elements of $\mathcal{F}$ are obtained via specializing $F$ at points in $\mathcal{H}$.
The coefficients of a cover $f_0\in \mathcal{F}$ are therefore contained in
\begin{equation} \label{field}
\mathbb{Q}\left(\mu_0, \sqrt{cP(\mu_0)}\right) \qquad \text{where} \qquad \mu_0:=\Psi_2(f_0).
\end{equation}

The explicit computation of $\Psi_2(f_0)$ can be done in the following way:
Write $\lambda_0 := \Psi(f_0)$. Then
the fundamental group $\pi_1(\mathbb{P}^1\setminus \{0,1,\infty\},\lambda_0)$ acts on $\mathcal{B}$ and on $\Psi_{24}^{-1}(\lambda_0)$ in equivalent ways, yielding an explicitly computable bijection $\chi:\mathcal{B}\to \Psi_{24}^{-1}(\lambda_0)$ respecting the latter equivalent actions. We obtain 
\begin{equation} \label{bij}
\Psi_2(f_0) = \chi(B)
\end{equation}
whenever the ramification of $f_0$ is contained in a block $B\in \mathcal{B}$.

In particular, if an explicit cover contained in $\mathcal{F}$ is already known with algebraic numbers as coefficients, it is possible to determine the unknown rational scalar $c$.

\subsection{Obtaining elements in \texorpdfstring{$\mathcal{F}$}{F}.}\label{compelement}

By Riemann's existence theorem there exists a $\mathrm{PSL}_6(2)$-cover $h: \mathbb{P}^1 \to \mathbb{P}^1$ ramified over $(0,\infty,-1,1)$ with ramification structure $(C_3,C_3,C_1, C_2)$. Then $h^2$ turns out to be a Belyi map with ramification locus $(0,\infty,1)$ and monodromy group contained in  $ \mathrm{PSL}_{6}(2) \wr C_2 \leq S_{126}$. Its ramification consists of cycle structures $(6^{20}.2^3$, $6^{20}.2^3$, $2^{44}.1^{38})$. Using the method described in \cite{BKW}, this Belyi map of degree $126$ can be computed explicitly.
Clearly, this yields a defining (approximative) equation for $h$.

After applying suitable Möbius transformations and slightly moving the ramification points of $h$ using Newton iteration we obtain a complex approximation of a cover $f_{\mathrm{start}} \in \mathcal{F}$ with $\Psi(f_{\mathrm{start}}) =\lambda_0 := \Psi_{24}(\frac{1}{6})$.
The approach described in \ref{walking} allows the computation of a complex approximation of a cover $f_{\mathrm{end}}\in \mathcal{F}$ with $\Psi(f_{\mathrm{end}}) = \lambda_0$ and ramification contained in $B \in \mathcal{B}$ such that $\chi(B) = \frac{1}{6}$. In combination with \eqref{bij} this implies $\Psi_2(f_{\mathrm{end}}) = \frac{1}{6} \in \mathbb{Q}.$ Due to \eqref{field} the coefficients of $f_{\mathrm{end}}$ can be recognized in the quadratic number field
$\mathbb{Q}(\sqrt{cP(\Psi_2(f_{\mathrm{end}}))}) = \mathbb{Q}(\sqrt{-c \cdot 3 \cdot 7 \cdot 457})$. With the help of Magma we find $c=3$. Note that $\mathcal{H}$ from \eqref{H} is finally computed.

\subsection{Computing the universal cover \texorpdfstring{$F$}{F}.} \label{universal}
Any coefficient of $F \in \mathbb{Q}(\mathcal{F})(X) = \mathbb{Q}(\mu,y)(X)$ from \eqref{F} can be expressed as
\[
H_1 (\mu) + y H_2(\mu)
\]
where $H_1,H_2\in \mathbb{Q}(\mu)$. 
By slightly moving the ramification points of $f_{\mathrm{end}}$ via Newton iteration as described in \ref{newtoniteration} we obtain many defining equations of covers $f \in \mathcal{F}$ such that $\Psi_2(f)$ is a rational number close to $\Psi_{2}(f_{\mathrm{end}})$. Considering \eqref{field}
the coefficients of $f$ are then contained in $\mathbb{Q}(\sqrt{3P(\Psi_2(f))})$,
allowing us to read off $H_1(\Psi_2(f))$ and $H_2(\Psi_2(f))$.
Therefore, both $H_1$ and $H_2$ can be computed by interpolation. 
The resulting universal cover $F= \frac{p}{q}$ is presented in file~\texttt{3.3A}.

\begin{remark}
The standard approach of computing a hyperelliptic model $\mathcal{H}$ for $\mathcal{F}$ consists of finding a polynomial relation between $\lambda$ and a fixed coefficient of $F$ which are usually expected to generate the entire function field $\mathbb{Q}(\mathcal{F})$ with $[\mathbb{Q}(\mathcal{F}): \mathbb{Q}(\lambda)] = 48$.
This is achieved by interpolation via computing several elements $f\in\mathcal{F}$ such that $\Psi(f) \in \mathbb{Q}$ and recognizing the previously fixed coefficient as algebraic degree-48 numbers.
A Riemann-Roch space computation then leads to the hyperelliptic model $\mathcal{H}$.

Our approach takes advantage that the monodromy group of $\Psi$ is imprimitive with an explicitly computable genus-0 subcover $\Psi_{24}$. As explained in \ref{model} and \ref{compelement} this yields a defining equation for $\mathcal{H}$ after recognizing only a degree-2 number.
\end{remark}

\section{Verification and Consequences}\label{sec3}

An essential tool for the upcoming verification process is the following criterion that guarantees the existence of subgroups of a Galois group having specific properties. Similar techniques have already been applied for example by Malle, see~\cite{Malle}.

\begin{lemma}\label{lem}
Let $K$ be an arbitrary field and $f(t,X)\in K(t)[X]$ a separable and irreducible polynomial. Furthermore, let $p,q\in K[X]$ be coprime polynomials such that $p-tq\in K(t)[X]$ is separable and $f(\frac{p(t)}{q(t)},X)\in K(t)[X]$ splits nontrivially into irreducible factors of degree $d_1,\dots,d_r$. Then the following holds:
\begin{enumerate}[(a)]
\item The Galois group $\mathrm{Gal}(f\mid K(t))$ has a subgroup of index dividing $\deg(p-tq)$ with orbit lengths $d_1,\dots,d_r$.
\item If $\mathrm{Gal}(p-tq\mid K(t))$ is primitive and both $\mathrm{Gal}(p-tq\mid K(t))$ and $\mathrm{Gal}(f\mid K(t))$ have the same order, then the splitting fields of $p-tq$ and $f$ over $K(t)$ coincide.
\end{enumerate}
\end{lemma}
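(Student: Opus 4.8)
The plan is to interpret the substitution $t\mapsto p(t)/q(t)$ as a finite field extension and then read off the factorisation of $f(p/q,X)$ from a Galois action via the translation (compositum) theorem. Write $s:=p(t)/q(t)\in K(t)$ and consider the subfield $K(s)\subseteq K(t)$. Since $p$ and $q$ are coprime, $p(X)-sq(X)$ is irreducible over $K(s)$ and has $t$ as a root, so $[K(t):K(s)]=\max(\deg p,\deg q)=\deg(p-tq)=:n$; moreover separability of $p-tq$ guarantees that $K(t)/K(s)$ is separable, and its Galois closure $\widetilde{K(t)}$ over $K(s)$ carries the action of $\mathrm{Gal}(p-tq\mid K(t))$ on the $n$ roots (after renaming the base transcendental, see below). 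The $K$-isomorphism $\iota\colon K(t)\to K(s)$, $t\mapsto s$, identifies $\mathrm{Gal}(f\mid K(t))$ with $G:=\mathrm{Gal}\bigl(f(s,X)\mid K(s)\bigr)$; let $L$ be the splitting field of $f(s,X)$ over $K(s)$.

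For part (a), I would observe that $f(p(t)/q(t),X)=f(s,X)$ and that its splitting field over $K(t)$ is the compositum $L\cdot K(t)$. The translation theorem for the finite Galois extension $L/K(s)$ gives, by restriction, an isomorphism
\[
\mathrm{Gal}\bigl(L\cdot K(t)\mid K(t)\bigr)\;\xrightarrow{\ \sim\ }\;H:=\mathrm{Gal}\bigl(L\mid L\cap K(t)\bigr)\le G .
\]
The irreducible factors of $f(s,X)$ over $K(t)$ correspond exactly to the orbits of $H$ on the roots (this is where $f$ separable enters), so $H$ has orbit lengths $d_1,\dots,d_r$. Finally $K(s)\subseteq L\cap K(t)\subseteq K(t)$ gives $[G:H]=[L\cap K(t):K(s)]$, which divides $[K(t):K(s)]=\deg(p-tq)$ by the tower law. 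Transporting $H$ back through $\iota$ produces the required subgroup of $\mathrm{Gal}(f\mid K(t))$.

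For part (b), I would exploit primitivity of $A:=\mathrm{Gal}(p-tq\mid K(t))$. Via $\iota$, the group $A$ is realised as $\mathrm{Gal}\bigl(\widetilde{K(t)}\mid K(s)\bigr)$ acting on the roots of $p-sq$, one of which is $t$; hence primitivity is equivalent to there being no field strictly between $K(s)$ and $K(t)$. Since $L\cap K(t)$ is such an intermediate field, it equals $K(s)$ or $K(t)$. The first case forces $H=G$, i.e.\ $f(s,X)$ is irreducible over $K(t)$, contradicting the hypothesis that $f(p/q,X)$ splits nontrivially; so $L\cap K(t)=K(t)$, i.e.\ $K(t)\subseteq L$. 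Then $\widetilde{K(t)}\subseteq L$, and restriction yields a surjection $G\twoheadrightarrow\mathrm{Gal}\bigl(\widetilde{K(t)}\mid K(s)\bigr)\cong A$. The order hypothesis $|G|=|A|$ makes this an isomorphism, so $L=\widetilde{K(t)}$: over $K(s)$ the splitting fields of $f(s,X)$ and of $p-sq$ coincide. Applying $\iota^{-1}$ one last time transports this equality to the splitting fields of $f$ and $p-tq$ over $K(t)$, as claimed.

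The degree computation $[K(t):K(s)]=\deg(p-tq)$ and the correspondence ``irreducible factors $\leftrightarrow$ orbits'' are routine. The point demanding care is the bookkeeping of the two isomorphic copies of the group — $G$ living over $K(s)$ versus $\mathrm{Gal}(f\mid K(t))$ over $K(t)$ — and verifying that the renaming isomorphism $\iota$ legitimately carries the intrinsic field equality $L=\widetilde{K(t)}$ to the statement over $K(t)$; this is where I expect the main subtlety to lie.
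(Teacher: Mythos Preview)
Your argument is correct and follows the same underlying idea as the paper's: pass to a degree-$\deg(p-tq)$ extension of the base rational function field, invoke the translation theorem to identify the relevant subgroup, and then use primitivity together with the order hypothesis for part~(b). The only difference is the direction in which you set things up. You put $s:=p(t)/q(t)$ so that $K(s)\subseteq K(t)$ and then transport back via the renaming isomorphism $\iota$ at the end; the paper instead lets $s$ be a root of $p(X)-tq(X)$, so that $K(t)\subseteq K(s)$ and $t=p(s)/q(s)$, and works directly with $\Omega_f\cap K(s)$ inside a fixed algebraic closure of $K(t)$. With that choice the final equality $\Omega_{p-tq}=\Omega_f$ is a literal equality of subfields over $K(t)$ and no transport is needed---the subtlety you flag in your last paragraph simply disappears. (The paper still uses the same isomorphism $K(t)\cong K(s)$ implicitly, namely to carry the factorisation hypothesis for $f(p(t)/q(t),X)$ over to $f(p(s)/q(s),X)=f(t,X)$; it just does so at the start rather than at the end.) Both routes are equally valid; the paper's framing is marginally cleaner for exactly the reason you anticipated.
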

\begin{proof}
(a) Let $\Omega_f$ (resp.\ $\Omega_{p-tq}$) be the splitting field of $f$ (resp.\ $p-tq$) over $K(t)$ and $s$ a root of the irreducible polynomial $p-tq\in K(t)[X]$. Then $t= \frac{p(s)}{q(s)}$ and, according to the assumption,
$
f(t,X) = f(\frac{p(s)}{q(s)},X)
$
splits over $K(s)$ into irreducible factors of degree $d_1,\dots,d_r$. This also holds if we factorize $f$ over $\Omega_f \cap K(s)$. Therefore, $\mathrm{Gal}(\Omega_f \mid \Omega_f \cap K(s))\leq \mathrm{Gal}(\Omega_f \mid K(t))$ is of index dividing $[K(s):K(t)] = \deg(p-tq)$ with orbit lengths $d_1,\dots,d_r$.

(b) Recall that $f$ splits nontrivially over $K(s)$, thus $K(s)\cap \Omega_f\neq K(t)$. Since $\mathrm{Gal}(p-tq\mid K(t))$ is primitive, the latter yields $K(s)\cap \Omega_f = K(s)$, therefore $K(s)\leq \Omega_f$. Of course, the normal closure of $K(s)$ over $K(t)$ is also contained in $\Omega_f$, thus $\Omega_{p-tq}\leq \Omega_f$. Due to $|\mathrm{Gal}(p-tq\mid K(t))| = |\mathrm{Gal}(f\mid K(t))|$ we find $\Omega_{p-tq} = \Omega_f$.
\end{proof}
\begin{lemma}\label{PSLProperty}
Let $G$ be a $2$-transitive subgroup of $S_{63}$ that contains a subgroup of index dividing $63$ with orbit lengths $31$ and $32$. Then, $G$ is isomorphic to $\mathrm{PSL}_6(2)$.
\end{lemma}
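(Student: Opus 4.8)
The plan is to reduce to the classification of finite $2$-transitive permutation groups and then eliminate all but one candidate using the orbit condition. First I would invoke the (CFSG-based) classification of $2$-transitive groups to list all possibilities in degree $63$. Since $63 = 3^2\cdot 7$ is not a prime power, no group of affine type occurs, so $G$ must be almost simple. Running through the almost simple families, the only degree-$63$ cases are: the natural actions of $A_{63}$ and $S_{63}$; and $\mathrm{PSL}_d(q)$ acting on the points of $\mathrm{PG}(d-1,q)$ with $(q^d-1)/(q-1)=63$, which forces $q=2$, $d=6$, i.e.\ $\mathrm{PSL}_6(2)=\mathrm{GL}_6(2)$ on the $63$ nonzero vectors of $\mathbb{F}_2^6$. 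The remaining families ($\mathrm{PSU}_3(q)$, $\mathrm{Sz}(q)$, $\mathrm{Ree}(q)$ of degrees $q^3+1,q^2+1,q^3+1$; the groups $\mathrm{Sp}_{2d}(2)$ of degrees $2^{d-1}(2^d\pm1)$; and the sporadic actions) never yield degree $63$. Thus $G \in \{A_{63},\,S_{63},\,\mathrm{PSL}_6(2)\}$.

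It then remains to rule out $A_{63}$ and $S_{63}$ using the subgroup $H\le G$ of index dividing $63$ whose orbits on the $63$ points have lengths $31$ and $32$. The key observation is that the two orbits $A$ (of size $31$) and $B$ (of size $32$) are canonically attached to $H$ and have different sizes, so every element of $H$ fixes each of them setwise; hence $H\le \mathrm{Stab}_G(A)$. For $G=S_{63}$ this stabilizer is $\mathrm{Sym}(A)\times\mathrm{Sym}(B)\cong S_{31}\times S_{32}$, and for $G=A_{63}$ it is its intersection with $A_{63}$. Since $A_{63}$ already acts transitively on the $31$-element subsets of the $63$ points, in both cases the orbit--stabilizer theorem gives $[G:\mathrm{Stab}_G(A)] = \binom{63}{31}$. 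Consequently $[G:H]\ge \binom{63}{31}$, which is vastly larger than $63$ and in particular fails to divide $63$, contradicting the hypothesis on $H$. Therefore neither $A_{63}$ nor $S_{63}$ can occur.

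Combining the two steps leaves $G\cong\mathrm{PSL}_6(2)$ as the only possibility, which completes the argument. As a sanity check one verifies that $\mathrm{PSL}_6(2)$ genuinely satisfies the hypothesis: the stabilizer of a hyperplane (a $5$-dimensional subspace) is a maximal parabolic of index $63$ whose orbits on the $63$ nonzero vectors are the $31$ vectors inside the hyperplane and the $32$ outside it. The only nontrivial input is the classification of $2$-transitive groups of degree $63$; the genuinely delicate point is confirming that this list is complete, in particular that no further almost simple family sneaks in at degree $63$. Once the list is in hand the elimination is entirely elementary, so I expect the write-up to hinge on citing the classification cleanly rather than on any intricate computation.
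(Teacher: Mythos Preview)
Your argument is correct: the classification of finite $2$-transitive groups does reduce the degree-$63$ candidates to $A_{63}$, $S_{63}$, and $\mathrm{PSL}_6(2)$, and your index bound via $\binom{63}{31}$ cleanly kills the first two. This is exactly the route the paper mentions in its first sentence (``follows immediately from the classification of finite $2$-transitive groups'').

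However, the paper then deliberately steps back from CFSG and gives a more self-contained alternative. Instead of listing $2$-transitive groups, it observes (via Dembowski) that a $2$-transitive group of degree $63$ possessing a subgroup of index $\le 63$ with orbits $31,32$ must embed in the automorphism group of a symmetric $2$-$(63,31,15)$-design; a theorem of Kantor then identifies this design as $\mathrm{PG}(5,2)$, whose automorphism group is $\mathrm{PSL}_6(2)$, and one checks that $\mathrm{PSL}_6(2)$ has no proper $2$-transitive subgroup. What this buys is independence from the full CFSG-based classification: Kantor's result is a concrete finite-geometry theorem, so the argument rests on a much lighter foundation. Your approach, by contrast, is shorter to state and requires no design theory, but its correctness hinges entirely on the completeness of the degree-$63$ list---which you yourself flag as ``the genuinely delicate point.'' Both proofs are valid; the paper's is the one that can be verified without trusting the $2$-transitive classification as a black box.
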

\begin{proof}
This follows immediately from the classification of finite 2-transitive groups which relies on the classification of finite simple groups.
We indeed do not require such a strong result:

As explained by Dembowski~\cite[2.4.3 and 2.4.5]{Dembowski} we have $G \leq \mathrm{Aut}(\mathcal{D})$ where $\mathcal{D}$ is a symmetric $2$-$(63,31,\lambda)$-design $\mathcal{D}$ for some $\lambda \in \mathbb{N}$. An easy combinatorial consideration yields $\lambda = 15$. 
Thus, by a result of Kantor~\cite{Kantor}, $\mathcal{D}$ must be isomorphic to the projective space 
$\mathrm{PG}(5,2)$. Since $\mathrm{Aut}(\mathrm{PG}(5,2)) = \mathrm{PSL}_6(2)$ does not contain any proper 2-transitive subgroups, we conclude $G\cong \mathrm{PSL}_6(2)$.
\end{proof}

\begin{theorem} \label{thG}
Let $\mathcal{H}$ be the curve computed in \ref{model} and \ref{compelement} with defining equation
\[
 y^2 = 3\left(\mu^5 - \frac{137}{4}  \mu^4 + \frac{178}{3}  \mu^3 - 34 \mu^2 + 8 \mu - \frac{2}{3}\right)\left(\mu - \frac{1}{4}\right)\left(\mu - \frac{5}{16}\right).
\]
Furthermore, let
\[
F:=\frac{p}{q}\in \mathbb{Q}(\mathcal{H})(X) = \mathbb{Q}(\mu,y)(X)
\]
be the rational function computed in $\ref{universal}$, see ancillary file~$\mathtt{3.3A}$, and $\Psi_{24}= \frac{p_{24}}{q_{24}}$ the map from \eqref{belyi}.
Then the following holds:
\begin{enumerate}[(a)]
\item
The polynomial $p-tq$ defines a regular $\mathrm{PSL}_6(2)$-extension of $\mathbb{Q}(\mu,y,t)$. The ramification locus with respect to $t$ is given by $\mathcal{R}:=(0,\infty,1+\sqrt{\Psi_{24}(\mu)},1-\sqrt{\Psi_{24}(\mu)})$ with ramification structure $(2^{28}.1^7,2^{16}.1^{31},3^{20}.1^3,3^{20}.1^3)$.
\item
Every cover in $\mathcal{F}$ is obtained in a unique way via specialization of $F$ at some point in $\mathcal{H}$.
\end{enumerate}

\end{theorem}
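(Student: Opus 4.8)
\emph{Proof proposal.} Write $K := \mathbb{Q}(\mathcal{H}) = \mathbb{Q}(\mu,y)$ for the function field of $\mathcal{H}$, so that the claimed extension in (a) is the splitting field of $p - tq$ over $K(t)$. The plan for part (a) is to avoid the (usually hard) upper bound $G \leq \mathrm{PSL}_6(2)$ altogether and instead pin down $G := \mathrm{Gal}(p - tq \mid K(t)) \leq S_{63}$ by feeding purely group-theoretic data into Lemma~\ref{PSLProperty}. First I would verify computationally that $p - tq \in K(t)[X]$ is separable and irreducible of degree $63$, so that $G$ is transitive. For $2$-transitivity I would let $s$ be one root and check that $(p - tq)/(X - s)$ stays irreducible of degree $62$ over $K(s)$, i.e.\ that the point stabilizer is transitive on the remaining roots. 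The delicate input of Lemma~\ref{PSLProperty} is a subgroup of index dividing $63$ with orbit lengths $31$ and $32$; the natural candidate is the stabilizer of a hyperplane of $\mathrm{PG}(5,2)$, whose orbits on the $63$ points have exactly these sizes. To detect it I would bring in Lemma~\ref{lem}(a): taking the dual degree-$63$ cover $\tilde p - t\tilde q$ (the hyperplane action, which shares the Galois closure of $p - tq$ and can be produced as a resolvent of $p - tq$ or extracted from the auxiliary degree-$126$ cover), I would show that $\tilde p - t\tilde q$ factors over $K(s)$ into irreducible polynomials of degrees $31$ and $32$. Lemma~\ref{lem}(a) then furnishes the required subgroup of $G$, and Lemma~\ref{PSLProperty} forces $G \cong \mathrm{PSL}_6(2)$.

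Regularity would then follow formally. Since $G = \mathrm{Gal}(p - tq \mid K(t)) \cong \mathrm{PSL}_6(2)$ is simple, the geometric monodromy group $G_{\mathrm{geo}} \trianglelefteq G$ is either trivial or all of $G$; because the cover is ramified its inertia generators are nontrivial, so $G_{\mathrm{geo}} = G$, the constant field of the splitting field is $K$, and the extension of $K(t)$ is regular. For the ramification data I would compute $\mathrm{disc}_X(p - tq) \in K[t]$ and verify that, together with $t = \infty$, its zeros are precisely $0$ and $1 \pm \sqrt{\Psi_{24}(\mu)}$, so that the branch locus equals $\mathcal{R}$; at each of the four branch points the cycle type of the inertia generator is then read off from the factorization type of $p - tq$ over the completed local ring, which I expect to yield $(2^{28}.1^7,\,2^{16}.1^{31},\,3^{20}.1^3,\,3^{20}.1^3)$, i.e.\ the classes $(C_1,C_2,C_3,C_3)$.

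For part (b) I would invoke the moduli interpretation set up in the construction: $\mathcal{H}$ is a $\mathbb{Q}$-model of the Hurwitz curve $\mathcal{F}$ with $\Psi = \Psi_{24}\circ\Psi_2$ of degree $|\mathrm{SNi}^{\mathrm{in}}(C)| = 48$, and $F$ is the associated universal cover. Specializing $F$ at a point $(\mu_0,y_0) \in \mathcal{H}$ with $\lambda_0 := \Psi_{24}(\mu_0) \notin \{0,1,\infty\}$ gives, by the generic statement (a), a degree-$63$ cover with ramification locus $0,\infty,1\pm\sqrt{\lambda_0}$, structure $C$ and monodromy $\mathrm{PSL}_6(2)$ satisfying the normalization (ii), hence an element of $\mathcal{F}$; conversely, for $f \in \mathcal{F}$ the point $\Psi_2(f) \in \mathcal{H}$ computed via the bijection $\chi$ of~\ref{defs} recovers $f$ upon specialization. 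Uniqueness is the statement that over each admissible $\lambda_0$ both sides are in bijection with $\mathrm{SNi}^{\mathrm{in}}(C)$, matching $\deg\Psi = 48$, so that distinct points of $\mathcal{H}$ yield distinct covers.

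The hard part will be the precise identification of $G$ as $\mathrm{PSL}_6(2)$, and within it the production of the subgroup with orbit lengths $31$ and $32$: this is exactly where Lemma~\ref{PSLProperty} spares us the upper bound, at the cost of having to construct the dual hyperplane cover and to certify a factorization of type $31 + 32$ over the genus-$3$ function field $K(t)$. By comparison I expect transitivity, $2$-transitivity and the discriminant/ramification computations to be routine, if computationally heavy, verifications.
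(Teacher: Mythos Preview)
Your overall architecture for (a) --- feed $2$-transitivity plus a $31{+}32$ subgroup into Lemma~\ref{PSLProperty} via Lemma~\ref{lem}(a), then deduce regularity from simplicity --- matches the paper's, but the implementation diverges in a way worth noting. You propose to carry out all factorizations generically over $K(t)=\mathbb{Q}(\mu,y)(t)$ and to manufacture the ``dual'' hyperplane cover $\tilde p-t\tilde q$ as a resolvent. The paper instead \emph{specializes} $F$ at the single rational point $(0,\tfrac{1}{8}\sqrt{-10})\in\mathcal{H}$, obtaining $f_0=p_0-tq_0\in\mathbb{Q}(\sqrt{-10},t)[X]$, and then gets the hyperplane cover for free as the complex conjugate $\overline{f_0}$: the two self-twists $f_0\bigl(\tfrac{p_0(t)}{q_0(t)},X\bigr)$ and $f_0\bigl(\tfrac{\overline{p_0}(t)}{\overline{q_0}(t)},X\bigr)$ factor as $1{+}62$ and $31{+}32$ over $\mathbb{Q}(\sqrt{-10},t)$, and Lemma~\ref{PSLProperty} finishes. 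The geometric monodromy is then transported back to the generic fibre by the specialization theorem \cite[Theorem~III.6.4]{MM}, and $A=G$ follows since $\mathrm{PSL}_6(2)$ is self-normalizing in $S_{63}$. This buys two things over your plan: the factorizations happen over a quadratic number field rather than a genus-$3$ function field, and no resolvent (whose very definition would presuppose an embedding $G\hookrightarrow\mathrm{PSL}_6(2)$ you do not yet have) is needed. If you want to stay generic, the natural candidate for $\tilde p/\tilde q$ is $F^\sigma$ with $\sigma:y\mapsto -y$ the hyperelliptic involution --- exactly what complex conjugation realizes at the chosen specialization.

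Your part (b), however, has a genuine gap. Invoking the moduli interpretation and saying ``$F$ is the associated universal cover'' is circular: the content of (b) is precisely to \emph{verify} that the explicitly computed $F$ is the universal family. Your counting argument (both fibres have $48$ elements) establishes a bijection only once you know the specialization map $\mathcal{H}\to\mathcal{F}$ is injective, i.e.\ that distinct points $(\mu_k,y_k)$ yield distinct covers; nothing you wrote excludes, say, $F\in\mathbb{Q}(\mu)(X)$, in which case $(\mu_0,y_0)$ and $(\mu_0,-y_0)$ would specialize identically. The paper closes this by proving $\mathbb{Q}_F=\mathbb{Q}(\mu,y)$: the discriminant of $p-tq$ forces $\lambda=\Psi_{24}(\mu)\in\mathbb{Q}_F$, and a single Magma check that a specialization with $\Psi_{24}(\mu_0)=\tfrac12$ has coefficient field of degree $48$ over $\mathbb{Q}$ pins $\mathbb{Q}_F$ to the top. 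Only then does the $48$-vs-$48$ count give the claimed bijection.
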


\begin{proof}
(a) 
We firstly verify that $f:=p-tq$ is ramified over $\mathcal{R}$ with ramification structure $C$ from the introduction. This can be done by studying the inseparability behaviour of $f$ at the places $t\mapsto t_0$ for $t_0 \in \mathcal{R}$. The corresponding factorizations are given in the file~\texttt{3.3B}. In particular, the behaviour above  $1 \pm \sqrt{\Psi_{24}(\mu)}$ was obtained by interpolating the factorizations of several specialized polynomials. The ramification locus of $f$ cannot be larger than $\mathcal{R}$, otherwise it would contradict the Riemann-Hurwitz formula.

Let $\Omega$ be the splitting field of $p-tq$ over $\mathbb{Q}(\mu,y,t)$. Then, the geometric monodromy group
$
G:=\mathrm{Gal}(\Omega \mid (\Omega \cap \overline{\mathbb{Q}(\mu,y}))(t))$
is normal in
$A:=\mathrm{Gal}(\Omega\mid \mathbb{Q}(\mu,y,t))$. 
We now consider the specialization of $f=p-tq$ at the point $(0,\frac{1}{8}\sqrt{-10}) \in \mathcal{H}$, denoted by 
\begin{equation} \label{f0}
 f_0 = p_0-t q_0 \in \mathbb{Q}(\sqrt{-10},t)[X].
\end{equation}
Note that $f_0$ is still ramified over $4$ points. Write $\Omega_0$ for the splitting field of $f_0$ over $\mathbb{Q}(\sqrt{-10},t)$.
Then, by \cite[Theorem III.6.4]{MM} and its proof, we find
$G \cong G_0:=\text{Gal}(\Omega_0\mid (\Omega_0 \cap \overline{\mathbb{Q}})(t))$.
Using the fact that $f_0(\frac{p_0(t)}{q_0(t)},X)$ and $f_0(\frac{\overline{p_0}(t)}{\overline{q_0}(t)},X)$ split over $ \mathbb{Q}(\sqrt{-10},t)$ into irreducible factors of degree $1,62$ and $31,32$, see file~\texttt{3.3C}, Lemma~\ref{lem}(a) implies that $A_0:=\text{Gal}(\Omega_0\mid \mathbb{Q}(\sqrt{-10},t))$ must be a $2$-transitive group that contains a subgroup of index dividing $63$ with orbit lengths $31$ and $32$.
According Lemma~\ref{PSLProperty} the group $A_0$ turns out to be $\mathrm{PSL}_6(2)$. Since $A_0$ is simple and $G_0$ is normal in $A_0$ we find $G\cong G_0 \cong \mathrm{PSL}_6(2)$. As $\mathrm{PSL}_6(2)$ is also self-normalizing in $S_{63}$, we end up with $A \cong \mathrm{PSL}_6(2)$.

(b) 
We will use the following notation: For a rational function $P$ over a field of characteristic $0$ we denote by $\mathbb{Q}_P$ the field extension of $\mathbb{Q}$ generated by the coefficients of $P$.

The normalized discriminant $\delta$ of $f= p-tq$ is a polynomial in $\mathbb{Q}_F[t]$. Since the roots of $\delta$ are given by the ramification locus of $f$ its factorization in $\mathbb{Q}_F[t]$ is either of the form
$\delta
=t^k(t- (1+\sqrt{\lambda}))^\ell(t- (1 - \sqrt{\lambda }) ) ^h
$
or
$\delta
=t^k(t^2-2t+1-\lambda)^\ell$
for some $k,\ell,h\in \mathbb{N}$ where $\lambda := \Psi_{24}(\mu)$. Both cases yield $\lambda\in \mathbb{Q}_F$, therefore $\mathbb{Q}(\lambda) \subseteq \mathbb{Q}_F \subseteq \mathbb{Q}(\mu,y)$ with $[\mathbb{Q}(\mu,y):\mathbb{Q}(\lambda)] = 48$.
Fix $(\mu_0,y_0)\in \mathcal{H}$ such that $\Psi_{24}(\mu_0) = \frac{1}{2}$. Then, for the specialization of $F$ at $(\mu_0,y_0)$, denoted by $F_{(\mu_0,y_0)}$, we compute $[\mathbb{Q}_{F_{(\mu_0,y_0)}} : \mathbb{Q}] = 48$ using Magma. We end up with $\mathbb{Q}_F = \mathbb{Q}(\mu,y)$.
From the latter we see that $\mu$ and $y$ are rational functions in the coefficients of $F$. Recall that for any $\lambda_0\in \mathbb{P}^1\setminus \{0,1,\infty\}$ we find distinct points $(\mu_1,y_1),\dots,(\mu_{48},y_{48})\in \mathcal{H}$ such that $\Psi_{24}(\mu_k) = \lambda_0$ for $k=1,\dots,48$. If we specialize $F$ at these points we obtain 48 distinct $\mathrm{PSL}_6(2)$-covers
 $F_{(\mu_1, y_1)}, \dots, F_{(\mu_{48},y_{48})}$ with ramification locus $(0,\infty, 1\pm \sqrt{\lambda_0})$ and ramification structure $C$, which are all normalized with respect to inner Möbius transformations (in the sense of (ii)). Therefore, all covers $F_{(\mu_1, y_1)}, \dots, F_{(\mu_{48},y_{48})}$ lie in $\mathcal{F}$ and correspond to all distinct $48$ quadruples in $\mathrm{SNi}^{\mathrm{in}}(C)$. As a consequence, each element in $\mathcal{F}$ can be obtained uniquely via specialization.
\end{proof}

\begin{remark}
 By looking at Theorem~\ref{thG}(b) and its proof we do not get any information about the specialization behaviour of $F$ at a point $(\mu_0,y_0)\in \mathcal{H}$ with $\Psi_{24}(\mu_0) \in \{0,1,\infty\}$.
Assume, the specialization of $F$ at $(\mu_0,y_0)$, denoted by $F_{(\mu_0,y_0)}$, is a degree-$63$ cover, then one of the following cases occurs:
\begin{center}
\renewcommand{\arraystretch}{1.5}
\begin{tabular}{c||c|c}
$\Psi_{24}(\mu_0)$ & ramification locus of $F_{(\mu_0,y_0)}$ & ramification structure of $F_{(\mu_0,y_0)}$ \\ \hline\hline
$0$ & $\{0,1,\infty \}$ & contains $C_1,C_2$ \\\hline
$1$ & $\{0,2,\infty \}$ & contains $C_2,C_3$ \\\hline
$\infty$ & $\{0,\infty \}$ & only contains $C_1$ \\
\end{tabular}
\end{center}
In none of these cases $\mathrm{PSL}_6(2)$ is the monodromy group of $F_{(\mu_0,y_0)}$: Using Magma we see that
$\mathrm{PSL}_6(2)$ does not contain generating tuples of length at most 3 satisfying the product 1 condition that correspond to the respective conjugacy classes.
\end{remark}

With a little more effort we can deduce from Theorem~\ref{thG} that  $\mathrm{PSL}_6(2)$ does not occur as the monodromy group of a rational function in $\mathbb{Q}(X)$ ramified over at least $4$ points. 

In order to achieve this result, we still have to study $\mathrm{PSL}_6(2)$-covers with ramification structure $C$ and ramification locus of type $(0,\infty, \pm \sqrt{c})$. These covers can be calculated explicitly by deforming the ramification locus of covers contained in $\mathcal{F}$ via Newton iteration by assembling the defining equations explained in subsection~\ref{defeq}.

\begin{theorem}\label{thm:deg24}
Let $K$  be the degree-$24$ number field, $c\in K$ the non-square and $p,q\in K[X]$ the monic polynomials given in the ancillary file~$\mathtt{3.4A}$. Then the Galois group $A:=\mathrm{Gal}(p-tq\mid K(t))$ is isomorphic to $\mathrm{PSL}_6(2)$ in its natural $2$-transitive action on $63$ elements. The ramification structure is given by $(2^{28}.1^7,2^{16}.1^{31},3^{20}.1^3,3^{20}.1^3)$ and the ramification locus with respect to $t$ is given by $(0,\infty,\sqrt{c},-\sqrt{c})$.
\end{theorem}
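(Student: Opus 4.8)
The plan is to follow the template of the proof of Theorem~\ref{thG}(a), with the simplification that the base $K(t)$ is already a rational function field over a number field, so no passage to an auxiliary specialization is needed and the group can be determined in a single step. Set $f := p - tq \in K(t)[X]$.

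First I would settle the ramification. After verifying that $f$ is separable and irreducible over $K(t)$ (a direct factorization check over $K$), I would read off the ramification of the associated degree-$63$ cover from the inseparability behaviour of $f$ at the places $t \mapsto t_0$ for $t_0 \in \{0, \infty, \sqrt c, -\sqrt c\}$, obtaining the cycle structures $(2^{28}.1^7, 2^{16}.1^{31}, 3^{20}.1^3, 3^{20}.1^3)$ from the local factorizations. Since $c$ is a non-square in $K$, the points $\pm\sqrt c$ form a conjugate pair defined over the quadratic extension $K(\sqrt c)$; as in Theorem~\ref{thG}(a) their shared ramification would be extracted by interpolating the factorizations of several specialized polynomials. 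A Riemann--Hurwitz count then forbids any further branch points, so the ramification locus is exactly $(0, \infty, \sqrt c, -\sqrt c)$.

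Next I would identify $A := \mathrm{Gal}(f \mid K(t))$ by two applications of Lemma~\ref{lem}(a). Irreducibility of $f$ makes $A$ transitive of degree $63$, so a point stabilizer has index exactly $63$. Since $f(\tfrac{p(t)}{q(t)}, X)$ splits over $K(t)$ into irreducible factors of degrees $1$ and $62$, Lemma~\ref{lem}(a) yields a subgroup of index dividing $63$ with orbit lengths $1, 62$; comparing indices forces it to be a point stabilizer, which is therefore transitive on the remaining $62$ points, so that $A$ acts $2$-transitively on the $63$ roots. Composing $f$ with a second rational function recorded in the ancillary file, the direct analogue of the conjugate cover $f_0(\tfrac{\overline{p_0}}{\overline{q_0}}, X)$ from the proof of Theorem~\ref{thG}(a), gives a factorization of type $31, 32$ and hence a subgroup of index dividing $63$ with orbit lengths $31$ and $32$. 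Lemma~\ref{PSLProperty} now identifies $A$ with $\mathrm{PSL}_6(2)$ in its natural action on the $63$ nonzero vectors of $\mathbb{F}_2^6$, which is the assertion. Finally, as $\mathrm{PSL}_6(2)$ is simple and the extension is ramified, the geometric monodromy group is a nontrivial normal subgroup of $A$ and hence equals $A$, so the $\mathrm{PSL}_6(2)$-extension of $K(t)$ is regular.

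The conceptual steps are routine; the real work is computational. The main obstacle I anticipate is producing and certifying the two splitting types ($1, 62$ and $31, 32$) together with the branch-point factorizations over the degree-$24$ field $K$, and in particular over $K(\sqrt c)$ for the conjugate pair $\pm\sqrt c$, where, just as in Theorem~\ref{thG}, the local ramification must be recovered by interpolation rather than from a single symbolic factorization.
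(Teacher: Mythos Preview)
Your strategy is sound in principle and mirrors the proof of Theorem~\ref{thG}(a), but the paper takes a genuinely different and computationally much lighter route. Rather than carrying out the two factorizations directly over $K(t)$ with $[K:\mathbb{Q}]=24$, the paper reduces $p,q$ modulo two distinct degree-one primes $\mathfrak{p},\mathfrak{q}$ of $\mathcal{O}_K$ lying over $67$ and performs the $1,62$ and $31,32$ splittings entirely over $\mathbb{F}_{67}(t)$: the second cover supplying the $31,32$ split is not an auxiliary rational function recorded in the ancillary file (as you conjecture) but rather the reduction $p_\mathfrak{q}/q_\mathfrak{q}$ modulo the \emph{other} prime, suitably rescaled. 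Lemmas~\ref{lem} and~\ref{PSLProperty} then give $A_\mathfrak{p}\cong\mathrm{PSL}_6(2)$; simplicity forces $G_\mathfrak{p}=A_\mathfrak{p}$, and Beckmann's good-reduction theorem \cite[Proposition~I.10.9]{MM} transports this back to the geometric monodromy group $G$ over $K(t)$, after which self-normalization in $S_{63}$ yields $A\cong\mathrm{PSL}_6(2)$. Your approach trades the appeal to Beckmann's theorem for heavy symbolic factorizations over a degree-$24$ base, and you would also have to produce the second cover yourself (for instance via a nontrivial embedding of $K$ into its Galois closure), since no such companion function is supplied in file~\texttt{3.4A}.
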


\begin{proof}
In the same fashion as in the proof of Theorem~$\ref{thG}$(a) it can be calculated easily that the ramification locus of $p-tq$ is indeed given by  $(0,\infty,\sqrt{c},-\sqrt{c})$ with ramification structure $(2^{28}.1^7,2^{16}.1^{31},3^{20}.1^3,3^{20}.1^3)$, see file~$\mathtt{3.4B}$.

Let $\Omega$ be the splitting field of $p-tq$ over $K(t)$. Recall that the geometric monodromy group $G := \text{Gal}(\Omega \mid (\Omega \cap \overline{K})(t))$ is normal in $A$.
Let $\mathfrak{p} = (67,a+7)$ and $\mathfrak{q}=(67,a+42)$ be the unique prime ideals of norm $67$ in the ring of integers $\mathcal{O}_K$ of $K$ where $a$ denotes the primitive element of $K$ used in file~$\mathtt{3.4A}$.
Write $p_\mathfrak{p}$ and $q_\mathfrak{p}$ for the reduction of $p$ and $q$ modulo $\mathfrak{p}$. 
Accordingly, we define 
$A_\mathfrak{p} := \text{Gal} (\Omega_\mathfrak{p} \mid (\mathcal{O}_K/\mathfrak{p})(t))$ 
and 
$G_\mathfrak{p} := \text{Gal} ( \Omega_\mathfrak{p} \mid (\Omega_\mathfrak{p} \cap  \overline{\mathcal{O}_K/\mathfrak{p}})(t))$ with $\Omega_\mathfrak{p}$ being the splitting field of $p_\mathfrak{p}- t q_\mathfrak{p}$ over $(\mathcal{O}_K/\mathfrak{p})(t)$. Again, $G_\mathfrak{p}$ is normal in $A_\mathfrak{p}$. Of course, we will use the same notation for the reduction modulo $\mathfrak{q}$.

Note that $p_\mathfrak{p}-\frac{p_\mathfrak{p}(t)}{q_\mathfrak{p}(t)}q_\mathfrak{p}$ and $p_\mathfrak{p}-16\cdot\frac{p_\mathfrak{q}(t)}{q_\mathfrak{q}(t)}q_\mathfrak{p}$ split into irreducible factors of $1,62$ and $31,32$ over $(\mathcal{O}_K/\mathfrak{p})(t) \cong \mathbb{F}_{67}(t)$. Therefore, by Lemma~\ref{lem} and \ref{PSLProperty}, the group $A_\mathfrak{p}$ must be isomorphic to $\mathrm{PSL}_6(2)$. As $A_\mathfrak{p}$ is simple, we see that $A_\mathfrak{p}$ and $G_\mathfrak{p}$ coincide. Since $\mathfrak{p}$ is a prime of good reduction for $p-tq$, we have $G_\mathfrak{p} \cong G$ by a theorem of Beckmann, see \cite[Proposition I.10.9]{MM}. Due to the fact that $\mathrm{PSL}_6(2)$ is self-normalizing in $S_{63}$ we end up with $A = \mathrm{PSL}_6(2)$. 
\end{proof}

\begin{corollary}\label{cor}
The group $\mathrm{PSL}_6(2)$ does not occur as the monodromy group of a rational function in $\mathbb{Q}(X)$  ramified over at least $4$ points.
\end{corollary}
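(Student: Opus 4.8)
The plan is to assume that some $g \in \mathbb{Q}(X)$ realizes $\mathrm{PSL}_6(2)$ as its monodromy group with at least four branch points, and to derive a contradiction by tracing $g$ back to a rational point on $\mathcal{H}$. Since a rational function is a genus-$0$ cover, its monodromy is a genus-$0$ generating tuple of $\mathrm{PSL}_6(2)$ of length $\geq 4$; appealing to the classification underlying the choice of $C$ in the introduction, which singles out $C=(C_1,C_2,C_3,C_3)$ in the degree-$63$ action as the only such tuple (up to braiding, simultaneous conjugation, and the graph automorphism interchanging the two degree-$63$ actions), I may assume $g$ has exactly four branch points with ramification structure $C$. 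The branch points of classes $C_1$ and $C_2$ are each the unique one of their cycle type, hence Galois-stable and $\mathbb{Q}$-rational, while the two $C_3$-branch points form a Galois-stable pair. A $\mathbb{Q}$-rational Möbius transformation on the target therefore moves the $C_1$- and $C_2$-points to $0$ and $\infty$; writing $\{a,b\}$ for the images of the $C_3$-points, one has $a+b,\,ab\in\mathbb{Q}$, and I split into the cases $a+b\neq 0$ and $a+b=0$.

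First I would treat the generic case $a+b\neq 0$. Rescaling by $2/(a+b)\in\mathbb{Q}$ places the two $C_3$-points at $1\pm\sqrt{\lambda}$ with $\lambda\in\mathbb{Q}\setminus\{0,1,\infty\}$, and the source normalizations of (ii) are cut out by $\mathbb{Q}$-rational linear conditions, so the resulting normalized cover $f_0$ lies in $\mathcal{F}$ and is defined over $\mathbb{Q}$. By Theorem~\ref{thG}(b) we have $f_0=F_{(\mu_0,y_0)}$ for a unique $(\mu_0,y_0)\in\mathcal{H}$, and since $\mathbb{Q}_F=\mathbb{Q}(\mu,y)$ the coordinates $\mu_0,y_0$ are rational functions of the coefficients of $f_0$; being defined over $\mathbb{Q}$, the point $(\mu_0,y_0)$ is therefore $\mathbb{Q}$-rational with $\Psi_{24}(\mu_0)=\lambda\in\mathbb{Q}\setminus\{0,1,\infty\}$. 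The task thus reduces to showing that $\mathcal{H}$ carries no such rational point.

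The remaining case $a+b=0$ is the harmonic configuration, where rescaling yields branch points $0,\infty,\pm\sqrt{c}$ with $c\in\mathbb{Q}$. This is precisely the situation excluded from $\mathcal{F}$, corresponding to the value $\lambda=\infty$ at which $\Psi_{24}$ ramifies, and I would dispose of it by invoking Theorem~\ref{thm:deg24}: the covers with ramification structure $C$ and ramification locus $(0,\infty,\pm\sqrt{c})$ are realized over the degree-$24$ number field $K$, and as their common field of definition has degree $24$ over $\mathbb{Q}$, none of them descends to $\mathbb{Q}$; in particular $g$ cannot occur here.

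The hard part will be the Diophantine step in the generic case, namely the explicit determination of $\mathcal{H}(\mathbb{Q})$ for the genus-$3$ hyperelliptic curve $y^2=3P(\mu)$. By Faltings' theorem this set is finite, but a direct Chabauty--Coleman argument is not guaranteed to apply, since the Mordell--Weil rank of the Jacobian of $\mathcal{H}$ need not drop below $3$; I would expect to combine descent methods with the Mordell--Weil sieve (as implemented in Magma) to prove that every rational point is either the point at infinity or one of the Weierstrass points $\mu\in\{\tfrac14,\tfrac{5}{16}\}$, each of which satisfies $\Psi_{24}(\mu_0)\in\{0,1\}$. For such points the Remark following Theorem~\ref{thG} shows, by a finite check in $\mathrm{PSL}_6(2)$, that the specialization of $F$ is not a $\mathrm{PSL}_6(2)$-cover. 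Hence no $\mathbb{Q}$-rational point of $\mathcal{H}$ yields a Galois-group-preserving specialization, which completes the contradiction in the generic case and, together with the harmonic case, establishes the corollary.
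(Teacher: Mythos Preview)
Your outline is essentially the paper's proof: reduce to ramification structure $C$, normalize the branch locus into the two cases $(0,\infty,1\pm\sqrt{\lambda_0})$ and $(0,\infty,\pm\sqrt{\lambda_0})$, handle the first via a $\mathbb{Q}$-point on $\mathcal{H}$ and the second via Theorem~\ref{thm:deg24}. Two of your speculative details are off, however. First, the Jacobian of $\mathcal{H}$ in fact has Mordell--Weil rank~$1$, so Chabauty--Coleman applies directly (no descent or sieve needed) and yields $\mu_0\in\{\tfrac12,\tfrac13,\tfrac14,\tfrac{5}{16},\infty\}$---more points than you anticipate, but all satisfy $\Psi_{24}(\mu_0)\in\{0,1\}$, which already contradicts $\lambda_0\notin\{0,1,\infty\}$ without invoking the Remark. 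Second, your harmonic-case argument does not quite line up: Theorem~\ref{thm:deg24} produces a cover over $K$ with branch locus $(0,\infty,\pm\sqrt{c})$ for a \emph{specific} non-square $c\in K$, whereas your $g$ has locus $(0,\infty,\pm\sqrt{c_0})$ with $c_0\in\mathbb{Q}$, so you cannot directly compare fields of definition. The paper fixes this by rescaling everything to the common locus $(0,\infty,\pm 1)$: the cover from Theorem~\ref{thm:deg24} then lives over $K(\sqrt{c})$, a degree-$48$ field, and its $48$ Galois conjugates exhaust the $48$ normalized covers in that fiber; meanwhile $g/\sqrt{c_0}$ is defined over the at most quadratic field $\mathbb{Q}(\sqrt{c_0})$, giving the contradiction.
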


\begin{proof}
Suppose, there exists a rational function $f$ defined over $\mathbb{Q}$ ramified over at least $4$ points and monodromy group $\mathrm{PSL}_6(2)$. 
As $\mathrm{PSL}_6(2)$ is simple any non-trivial decomposition $f = g \circ h$ implies $\mathrm{Mon}(h) \cong \mathrm{PSL}_6(2)$, therefore we may assume that $f$ is indecomposable with primitive monodromy group.
A Magma computation shows that $C$ is the only genus-$0$ class vector of length at least $4$ containing generating tuples for $\mathrm{PSL}_6(2)$ in a primitive permutation action, thus $f$ has degree $63$ with ramification structure $C=(C_1,C_2,C_3,C_3)$.
The branch cycle lemma, see \cite[Lemma 2.8]{Voe}, asserts that the ramification locus of $f$ is of the form $(a_1,a_2,a_3,a_4)$ where $a_1,a_2 \in \mathbb{P}^1(\mathbb{Q})$ and $a_3,a_4$ fulfil a degree-2 relation over $\mathbb{Q}$.
Hence, after applying a suitable outer Möbius transformation we may assume --- without altering the field of definition --- that $f$ either has ramification locus $(0,\infty, 1\pm\sqrt{\lambda_0})$ or $(0,\infty, \pm\sqrt{\lambda_0})$ for some $\lambda_0 \in \mathbb{P}^1(\mathbb{Q})\setminus \{0,1,\infty\}$. We will now study both cases:
\begin{enumerate}
\item case $(0,\infty, 1\pm\sqrt{\lambda_0})$: Using the notation and result from Theorem~\ref{thG}(b) there exist 48 specialized covers $F_{(\mu_1,y_1)},\dots,F_{(\mu_{48},y_{48})}\in \mathcal{F}$ with $\Psi(F_{(\mu_{k},y_{k})}) = \lambda_0$ for $k \in \{1,\dots,48\}$. Up to inner Möbius transformations $f$ has to coincide with $F_{(\mu_{k},y_{k})}$ for some $k\in \{1,\dots,48\}$, therefore $F_{(\mu_{k},y_{k})}$ also has to be defined over $\mathbb{Q}$, in particular $(\mu_{k},y_{k})$ must be a $\mathbb{Q}$-rational point on $\mathcal{H}$ with $\lambda_0 = \Psi_{24}(\mu_k) \not \in \lbrace 0,1,\infty \rbrace$.

Since $\mathcal{H}$ is given by a hyperelliptic genus-$3$ model and its Jacobian is of Mordell-Weil rank 1, Chabauty's algorithm (with the implementation in Sage~\cite{sagemath} presented in \cite{Chabauty}) gives us the complete list of $\mathbb{Q}$-rational points of $\mathcal{H}$. We find $\mu_k \in \{ \frac{1}{2}, \frac{1}{3}, \frac{1}{4} , \frac{5}{16}, \infty \}$ and for all these values we see $\Psi_{24}(\mu_k) \in \lbrace 0,1 \rbrace$, a contradiction.

\item case $(0,\infty, \pm\sqrt{\lambda_0})$: After a suitable scaling process Theorem~\ref{thm:deg24} gives us 48 different $\mathrm{PSL}_6(2)$-covers $f_1,\dots,f_{48}$ that satisfy condition (ii) with ramification locus $(0,\infty,\pm 1)$. Each cover is defined over a degree-48 number field.

Since $\frac{f}{\sqrt{\lambda_0}}$ has ramification locus $(0,\infty, \pm 1)$ the cover $\frac{f}{\sqrt{\lambda_0}}$ defined over a quadratic number field has to coincide with $f_k$ for some $k\in \{1,\dots,48\}$ up to inner Möbius transformations, a contradiction.
\end{enumerate} 
This shows that $\mathrm{PSL}_6(2)$ cannot be the monodromy group of $f$.
\end{proof}

\section{Non-regular extensions of \texorpdfstring{$\mathbb{Q}(t)$}{Q(t)} with Galois group \texorpdfstring{$\text{Aut}(\mathrm{PSL}_6(2))$}{Aut(PSL(6,2))}}\label{sec4}
Although our approach does not yield a $\mathrm{PSL}_6(2)$-polynomial over $\mathbb{Q}(t)$ we at least get an explicit non-regular realization of $\mathrm{Aut}(\mathrm{PSL}_6(2))$ over $\mathbb{Q}(t)$.

\begin{theorem}
Let $f_0  \in  \mathbb{Q}(\sqrt{-10},t)[X]$ be the polynomial from \eqref{f0}, then the Galois group of $f_0 \overline{f_0}$ over  $\mathbb{Q}(t)$ is isomorphic to $\mathrm{Aut}(\mathrm{PSL}_6(2))$ in its imprimitive action on $126$ points.
\end{theorem}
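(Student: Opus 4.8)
The plan is to identify $f_0\overline{f_0}$ as defining the compositum of the two conjugate $\mathrm{PSL}_6(2)$-extensions cut out by $f_0$ and $\overline{f_0}$, and then show that this compositum has Galois group $\mathrm{Aut}(\mathrm{PSL}_6(2))$ over $\mathbb{Q}(t)$. The starting observation is that by Theorem~\ref{thG}(a) the polynomial $f = p-tq$ defines a regular $\mathrm{PSL}_6(2)$-extension of $\mathbb{Q}(\mu,y,t)$; specializing at the point $(0,\tfrac18\sqrt{-10})\in\mathcal{H}$ gives $f_0\in\mathbb{Q}(\sqrt{-10},t)[X]$, and the argument in the proof of Theorem~\ref{thG}(a) (via \cite[Theorem III.6.4]{MM}) already shows that the geometric and arithmetic monodromy groups of $f_0$ over $\mathbb{Q}(\sqrt{-10},t)$ both equal $\mathrm{PSL}_6(2)$. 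Applying the nontrivial automorphism of $\mathbb{Q}(\sqrt{-10})$ yields the conjugate polynomial $\overline{f_0}$, which defines a $\mathrm{PSL}_6(2)$-extension that is the Galois conjugate of the first.

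\emph{First I would} set up the field diagram: let $\Omega_0$ and $\overline{\Omega_0}$ be the splitting fields of $f_0$ and $\overline{f_0}$ over $L(t)$ with $L:=\mathbb{Q}(\sqrt{-10})$, and let $N$ be the splitting field of $f_0\overline{f_0}$ over $\mathbb{Q}(t)$, so that $N = \Omega_0\cdot\overline{\Omega_0}\cdot L(t)$ contains $L$. The goal is to compute $\mathrm{Gal}(N\mid\mathbb{Q}(t))$. Over $L(t)$ we have $\mathrm{Gal}(\Omega_0\overline{\Omega_0}\mid L(t))\hookrightarrow \mathrm{PSL}_6(2)\times\mathrm{PSL}_6(2)$, and since $\mathrm{PSL}_6(2)$ is simple and nonabelian, by Goursat's lemma this is either the full direct product or a diagonal copy of $\mathrm{PSL}_6(2)$; the diagonal case occurs precisely when $\Omega_0 = \overline{\Omega_0}$. \emph{The crucial step} is therefore to rule out $\Omega_0 = \overline{\Omega_0}$, i.e.\ to show the two conjugate covers are genuinely distinct. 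This should follow from the distinctness of the $48$ covers in $\mathcal{F}$ established in Theorem~\ref{thG}(b): the cover $f_0$ corresponds to a specific quadruple in $\mathrm{SNi}^{\mathrm{in}}(C)$, and complex conjugation sends it to a \emph{different} quadruple (equivalently, the point $(0,\tfrac18\sqrt{-10})$ and its conjugate $(0,-\tfrac18\sqrt{-10})$ are distinct non-conjugate points of $\mathcal{H}$ lying over the same $\lambda_0=\Psi_{24}(0)$, giving non-isomorphic covers); hence $\Omega_0\neq\overline{\Omega_0}$ and $\mathrm{Gal}(\Omega_0\overline{\Omega_0}\mid L(t)) \cong \mathrm{PSL}_6(2)\times\mathrm{PSL}_6(2)$.

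\emph{Next I would} account for the quadratic base field. Since $\sqrt{-10}\in N$, complex conjugation acts nontrivially and swaps $\Omega_0$ with $\overline{\Omega_0}$, so $\mathrm{Gal}(N\mid\mathbb{Q}(t))$ contains an element of order $2$ inducing the swap of the two direct factors of $\mathrm{PSL}_6(2)\times\mathrm{PSL}_6(2)$. This realizes the group $(\mathrm{PSL}_6(2)\times\mathrm{PSL}_6(2))\rtimes C_2 = \mathrm{PSL}_6(2)\wr C_2$ as a quotient, but the target $\mathrm{Aut}(\mathrm{PSL}_6(2))$ has order $2\cdot|\mathrm{PSL}_6(2)|$, not $2\cdot|\mathrm{PSL}_6(2)|^2$, so the naive wreath product is too large. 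The resolution is that the swapping involution must be \emph{twisted} by the outer automorphism of $\mathrm{PSL}_6(2)$: the conjugation $\overline{f_0}$ arises from $f_0$ not merely by relabelling but by the duality (transpose-inverse) outer automorphism of $\mathrm{PSL}_6(2)$, so the $C_2$ acts diagonally-plus-outer, collapsing the product to an index-$|\mathrm{PSL}_6(2)|$ subgroup isomorphic to $\mathrm{Aut}(\mathrm{PSL}_6(2))=\mathrm{PSL}_6(2).2$. \emph{The main obstacle} is precisely pinning down this twist and confirming the resulting order: rather than tracing the outer automorphism abstractly, the cleanest route (and the one consistent with the paper's computational style) is a direct Magma verification — reduce $f_0\overline{f_0}$ modulo a suitable prime of good reduction, compute the factorization type of several specializations to read off generating cycle structures, and check that the resulting group has order $|\mathrm{Aut}(\mathrm{PSL}_6(2))| = 2\cdot|\mathrm{PSL}_6(2)|$ acting imprimitively on $126$ points with blocks of size $63$; combined with the abstract fact that $N\supseteq\Omega_0$ forces $\mathrm{PSL}_6(2)$ as a composition factor and $N\supseteq L(t)$ forces the degree-$2$ extension, this identifies $\mathrm{Gal}(f_0\overline{f_0}\mid\mathbb{Q}(t))\cong\mathrm{Aut}(\mathrm{PSL}_6(2))$ in its $126$-point imprimitive action.
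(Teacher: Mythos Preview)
Your approach contains a genuine error at the ``crucial step'': you try to prove $\Omega_0 \neq \overline{\Omega_0}$, but in fact the opposite holds --- the splitting fields of $f_0$ and $\overline{f_0}$ over $\mathbb{Q}(\sqrt{-10},t)$ coincide. Your argument conflates two different notions of distinctness: the covers defined by $f_0$ and $\overline{f_0}$ are indeed non-isomorphic as marked, normalized degree-$63$ covers (they come from distinct points of $\mathcal{H}$ and hence distinct Nielsen classes), but this says nothing about their Galois closures. Two inequivalent faithful permutation representations of $\mathrm{PSL}_6(2)$ can, and here do, share the same splitting field; what differs is only the conjugacy class of the point stabilizer. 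The paper establishes $\Omega_0 = \overline{\Omega_0}$ via Lemma~\ref{lem}(b): the polynomial $f_0\bigl(\overline{p_0}(t)/\overline{q_0}(t),X\bigr)$ is reducible over $\mathbb{Q}(\sqrt{-10},t)$ (this is the $31{+}32$ factorization already computed in the proof of Theorem~\ref{thG}(a)), and since both Galois groups are the primitive group $\mathrm{PSL}_6(2)$ of the same order, their splitting fields must agree.

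Once $\Omega_0 = \overline{\Omega_0} =: \Omega$ is known, there is no need for Goursat's lemma, wreath products, or any ad hoc ``twist'': with $H = \mathrm{Gal}(\Omega \mid \mathbb{Q}(\sqrt{-10},t)) \cong \mathrm{PSL}_6(2)$ of index~$2$ in $G = \mathrm{Gal}(\Omega\mid\mathbb{Q}(t))$ one has $|G| = 2|H| = |\mathrm{Aut}(\mathrm{PSL}_6(2))|$ on the nose. It remains only to rule out $G \cong \mathrm{PSL}_6(2) \times C_2$. The paper does this by noting that a root $x$ of $f_0$ and a root $y$ of $\overline{f_0}$ have the same minimal polynomial $f_0\overline{f_0}$ over $\mathbb{Q}(t)$ but different minimal polynomials over $\mathbb{Q}(\sqrt{-10},t)$; hence their stabilizers $G_x,G_y$ are $G$-conjugate but not $H$-conjugate, so conjugation by an element of $G\setminus H$ induces a non-inner automorphism of $H$, forcing $G \cong \mathrm{Aut}(H) = \mathrm{Aut}(\mathrm{PSL}_6(2))$. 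The order mismatch you noticed ($2|\mathrm{PSL}_6(2)|^2$ versus $2|\mathrm{PSL}_6(2)|$) was exactly the signal that the hypothesis $\Omega_0 \neq \overline{\Omega_0}$ was wrong, not that an exotic collapse was required.
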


\begin{proof}
The Galois groups of $f_0 = p_0-tq_0$ and $\overline{f_0} = \overline{p_0}-t\overline{q_0}$ over $\mathbb{Q}(\sqrt{-10},t)$  are isomorphic to the primitive group $\mathrm{PSL}_6(2)$.
According to Lemma~\ref{lem}(b) both $f_0$ and $\overline{f_0}$ have the same splitting field $\Omega$ over $\mathbb{Q}(\sqrt{-10},t)$ since $f_0(\frac{\overline{p_0}(t)}{\overline{q_0}(t)},X)$ is reducible over $\mathbb{Q}(\sqrt{-10},t)$. 
Let $\Omega'$ be the splitting field of $f_0\overline{f_0}$ over $\mathbb{Q}(t)$ and $G := \mathrm{Gal}( \Omega' \mid \mathbb{Q}(t))$. Clearly, $\Omega'\leq \Omega$.
Since ${f_0\overline{f_0}}$ is irreducible over $\mathbb{Q}(t)$ but obviously reducible over $\mathbb{Q}(\sqrt{-10},t)$ we find $\sqrt{-10}\in \Omega'$, therefore $\Omega' = \Omega$ and $H := \mathrm{Gal}(\Omega \mid \mathbb{Q}( \sqrt{-10},t))$ is a subgroup of $G$ with index
$
[G:H]
= [\mathbb{Q}(\sqrt{-10},t): \mathbb{Q}(t) ] = 2.
$

Let $\varphi:G\to \mathrm{Aut}(H)$ be the conjugation action of $G$ on the normal subgroup $H$, $x$ a root of $f_0$ and $y$ a root of $\overline{f_0}$. 
The point stabilizers $G_x$ and $G_y$ are conjugate in $G$ but not in $H$, because $x$ and $y$ have the same minimal polynomial over $\mathbb{Q}(t)$ but not over $\mathbb{Q}(\sqrt{-10},t)$, therefore $\varphi(H) < \varphi(G)$ and
$\mathrm{Inn}(H) =  \varphi(H) < \varphi(G) \leq \mathrm{Aut}(H).$
Since $|\mathrm{Out}(\mathrm{PSL}_6(2))| =2$ and $H \cong \mathrm{PSL}_6(2)$ this implies $\varphi(G) = \mathrm{Aut}(H)$.
In combination with $|G|= 2\cdot |H| = |\mathrm{Aut}(H)|$ we see that $\varphi$ is an isomorphism.
\end{proof}

Coincidentally, $\mathrm{PSL}_6(2)$ also happens to contain a rigid, $\mathbb{Q}(\sqrt{-7})$-rational genus-0 generating triple sharing similar properties, in particular leading to (another) non-regular $\mathrm{Aut}(\mathrm{PSL}_6(2))$-extension of $\mathbb{Q}(t)$. For the explicit realization we again apply the method explained in \cite{BKW}.

\begin{theorem}
Let $p,q \in \mathbb{Q}(\sqrt{-7})[X]$ be the polynomials of degree $63$ from the ancillary file~$\mathtt{4.2A}$.
\begin{enumerate}[(a)]
\item The polynomial $p-tq$ has Galois group $\mathrm{PSL}_6(2) \leq S_{63}$ over $\mathbb{Q}(\sqrt{-7})(t)$ with ramification locus $(0,1,\infty)$  and ramification structure ($21^3$,$4^8.2^{12}.1^7$,$2^{28}.1^7$).
\item The product $(p-tq)(\overline{p}-t\overline{q})$ has Galois group $\mathrm{Aut}(\mathrm{PSL}_6(2)) \leq S_{126}$ over $\mathbb{Q}(t)$.
\end{enumerate}

\end{theorem}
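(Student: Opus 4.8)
\emph{Proof proposal.} The plan is to follow the two templates already established in this section: part~(a) is handled exactly as Theorem~\ref{thm:deg24}(a), while part~(b) mirrors the proof of the preceding $\mathrm{Aut}(\mathrm{PSL}_6(2))$-theorem for the product $f_0\overline{f_0}$.

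For~(a), I would first verify the claimed ramification by examining the inseparability behaviour of $p-tq$ at the places $t\mapsto 0,1,\infty$; from the factorizations (to be recorded in an ancillary file, as in file~$\mathtt{3.4B}$) one reads off the ramification structure $(21^3,4^8.2^{12}.1^7,2^{28}.1^7)$, and the Riemann--Hurwitz formula forbids any further branch points, so the ramification locus is exactly $(0,1,\infty)$. To pin down the Galois group, let $\Omega$ be the splitting field of $p-tq$ over $\mathbb{Q}(\sqrt{-7})(t)$ and recall that the geometric monodromy group $G$ is normal in $A=\mathrm{Gal}(\Omega\mid \mathbb{Q}(\sqrt{-7})(t))$. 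I would choose a prime ideal $\mathfrak{p}$ of $\mathcal{O}_{\mathbb{Q}(\sqrt{-7})}$ of good reduction for $p-tq$ and, via Beckmann's theorem \cite[Proposition I.10.9]{MM}, transfer the question to the reduction $A_\mathfrak{p}$ over the residue field, where $G_\mathfrak{p}\cong G$. Exhibiting one composed polynomial that splits as $1,62$ (forcing $2$-transitivity) together with a second, suitably scaled composition that splits as $31,32$, Lemma~\ref{lem}(a) supplies a subgroup of index dividing $63$ with orbit lengths $31,32$, whence Lemma~\ref{PSLProperty} gives $A_\mathfrak{p}\cong \mathrm{PSL}_6(2)$. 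Simplicity of $\mathrm{PSL}_6(2)$ forces $G_\mathfrak{p}=A_\mathfrak{p}$, so $G\cong\mathrm{PSL}_6(2)$, and since $\mathrm{PSL}_6(2)$ is self-normalizing in $S_{63}$ we conclude $A\cong\mathrm{PSL}_6(2)$. (Rigidity of the $\mathbb{Q}(\sqrt{-7})$-rational triple gives the regular realization directly as a cross-check, once the explicit polynomials are matched to the prescribed triple.)

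For~(b), part~(a) shows that $p-tq$ and $\overline{p}-t\overline{q}$ each have Galois group $\mathrm{PSL}_6(2)$ over $\mathbb{Q}(\sqrt{-7})(t)$. I would first check that the composition $(p-tq)$ evaluated at $t=\frac{\overline{p}(t)}{\overline{q}(t)}$ is reducible over $\mathbb{Q}(\sqrt{-7})(t)$; since $\mathrm{PSL}_6(2)$ is primitive, Lemma~\ref{lem}(b) then yields that $p-tq$ and $\overline{p}-t\overline{q}$ share a common splitting field $\Omega$. Writing $\Omega'$ for the splitting field of the product over $\mathbb{Q}(t)$ and $G=\mathrm{Gal}(\Omega'\mid \mathbb{Q}(t))$, irreducibility of the product over $\mathbb{Q}(t)$ together with its reducibility over $\mathbb{Q}(\sqrt{-7})(t)$ forces $\sqrt{-7}\in\Omega'$, so $\Omega'=\Omega$ and $H=\mathrm{Gal}(\Omega\mid \mathbb{Q}(\sqrt{-7})(t))\cong\mathrm{PSL}_6(2)$ is normal of index $2$ in $G$. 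Passing to the conjugation action $\varphi\colon G\to\mathrm{Aut}(H)$ and observing that the point stabilizers of a root of each factor are conjugate in $G$ but not in $H$, one obtains $\mathrm{Inn}(H)=\varphi(H)<\varphi(G)$; as $|\mathrm{Out}(\mathrm{PSL}_6(2))|=2$ this gives $\varphi(G)=\mathrm{Aut}(H)$, and $|G|=2|H|=|\mathrm{Aut}(H)|$ makes $\varphi$ an isomorphism.

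The main obstacle I expect lies in part~(a): locating a prime $\mathfrak{p}$ of good reduction whose reduced data genuinely exhibit both the $1,62$ and the $31,32$ splitting patterns, and finding the correct scaling on the second composition so that Lemma~\ref{lem} applies --- this is the step carrying all the group-theoretic weight, just as the choice of $\mathfrak{p},\mathfrak{q}$ and the factor $16$ did in Theorem~\ref{thm:deg24}. Part~(b) is then essentially formal, its only nontrivial input being the reducibility check that feeds Lemma~\ref{lem}(b).
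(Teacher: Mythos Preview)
Your plan is correct, and part~(b) matches the paper exactly. For part~(a), however, you take a detour that the paper avoids. You model the argument on Theorem~\ref{thm:deg24}, reducing modulo a prime $\mathfrak{p}$ of good reduction and invoking Beckmann's theorem to transport the geometric monodromy group back to characteristic~$0$. The paper instead works directly over $\mathbb{Q}(\sqrt{-7})(t)$: it simply factorizes $p-\frac{p(t)}{q(t)}\,q$ and $p-\frac{\overline{p}(t)}{\overline{q}(t)}\,q$ in $\mathbb{Q}(\sqrt{-7},t)[X]$ (obtaining degrees $1,62$ and $31,32$), and then Lemmas~\ref{lem}(a) and~\ref{PSLProperty} give $A\cong\mathrm{PSL}_6(2)$ immediately, with no reduction step and no appeal to the geometric monodromy group. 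The reason Theorem~\ref{thm:deg24} needed reduction was purely computational --- factoring bivariate polynomials of this size over a degree-$24$ number field is impractical --- whereas over the quadratic field $\mathbb{Q}(\sqrt{-7})$ the direct factorization is cheap. Your route would work, but it introduces the search for a good prime and the Beckmann machinery unnecessarily; the paper's argument is shorter and sidesteps precisely the ``main obstacle'' you anticipated. Note also that the $31,32$ factorization the paper computes for~(a) is exactly the reducibility check you need for Lemma~\ref{lem}(b) in~(b), so in the paper's approach the two parts share a single computation.
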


\begin{proof}
The ramification can be checked by inspecting the inseparability behaviour of $p$, $q$ and $p-q$.
A computation with Magma yields that $p-\frac{p(t)}{q(t)}q$ and $p-\frac{\overline{p}(t)}{\overline{q}(t)}q$  split in $\mathbb{Q}(\sqrt{-7},t)[X]$ into irreducible factors of degree $1$, $62$ and $31$, $32$, see file~\texttt{4.2B}. By repeating the arguments from the previous proofs both assertions follow. 
\end{proof}

\section*{Acknowledgements}

We would like to thank Joachim König for pointing out the open case $\mathrm{PSL}_6(2)$ as well as suggesting to study $\mathrm{Aut}(\mathrm{PSL}_6(2))$  over $\mathbb{Q}(t)$.
Thanks also to Stephan Elsenhans for valuable discussions about the verification process, and Peter Müller for pointing out a gap in the proof of Corollary~\ref{cor} and suggesting the alternative proof of Lemma~\ref{PSLProperty}.

\end{document}